\newcommand {\E}{\mathrm{E}}
\renewcommand{\d}{\text{\rm d}}
\newcommand{\sA}{\mathcal{A}}
\newcommand{\lip}{\mathrm{Lip}}
\newtheorem{stat}{Statement}[section]
\newtheorem{corollary}[stat]{Corollary}
\newtheorem{theorem}[stat]{Theorem}
\newtheorem{lemma}[stat]{Lemma}
\theoremstyle{definition}
\newtheorem{dfn}[stat]{Definition}\newtheorem{remark}[stat]{Remark}
\newtheorem{condition}[stat]{Condition}
\numberwithin{equation}{section}
\title{Asymptotic behavior of solution  and non-existence of global solution to a class of conformable time-fractional stochastic equation}
\begin{document}
\author{Erkan Nane$^{1,*}$\\
{\tt ezn0001@auburn.edu }
\and
Eze R. Nwaeze$^{2,+}$\\
{\tt  enwaeze@alasu.edu}
\and
McSylvester Ejighikeme Omaba$^{3,**}$\\
{\tt mcomaba@uhb.edu.sa}}
\date{$^1$ Department of Mathematics and Statistics, Auburn University, Auburn, AL 36849, USA\\[0.3cm]
$^2$Department of Mathematics and Computer Science, Alabama State University, Montgomery, AL 36101, USA,\\
$^3$ Department of Mathematics, College of Science, University of Hafr Al Batin P. O Box 1803 Hafr Al Batin 31991, KSA\\[0.3cm]
}

\maketitle

\begin{abstract}Consider the following class of conformable time-fractional  stochastic equation
$$T_{\alpha,t}^a u(x,t)=\lambda\sigma(u(x,t))\dot{W}_t,\,\,\,\,x\in\mathbb{R},\,t\in[a,\infty), \,\,0<\alpha<1,$$ with a non-random initial condition $u(x,0)=u_0(x),\,x\in\mathbb{R}$  assumed to be non-negative and bounded, $T_{\alpha,t}^a$ is a conformable time - fractional derivative, $\sigma:\mathbb{R}\rightarrow\mathbb{R}$ is globally Lipschitz continuous, $\dot{W}_t$  a generalized derivative of Wiener process and $\lambda>0$ is the noise level.  Given some precise and suitable conditions on the non-random initial function, we study the asymptotic behaviour of the solution with respect to the time parameter $t$ and the noise level parameter $\lambda$. We also show that when the non-linear term $\sigma$ grows faster than linear, the energy of the solution blows-up at finite time for all $\alpha\in (0,1)$.
\end{abstract}
{\bf Keywords:} Asymptotic behaviour, Conformable time-fractional derivative, Moment growth bounds, Global non-existence, stochastic equation, stochastic Volterra type equation. \\
{\bf 2010 MSC No:} 26A33,\, 34A08,\, 60H15,\,82B44.

\maketitle
\section{ Introduction}
\indent  It has been shown that most results on stochastic partial differential equations assume a global Lipschitz continuity condition on the mutliplicative non-linearity term $\sigma$, see \cite{Foondun,Foondun1,Foondun2, Omaba, Omaba1, Omaba2}. Many results  exist on asymptotic behaviour of solutions and non-existence of global solutions for different classes of fractional stochastic heat equations, see \cite{Foondun2,Foondun3,Omaba4} and their references.
In this paper, we study the long-term behaviour and global non-existence of solution to a class of conformable time- fractional stochastic differential equation. 

\indent The use of fractional derivative  has now received an increasingly high interest and  attention due to its physical and modelling applications in Science, Engineering and Mathematics, see \cite{Zhao} and the references threin. There are various definitions and generalizations of the fractional derivative, not limited to the Riemann--Liouville fractional derivative and Caputo--Dzhrbashyan fractional derivative, with their respective limitations, see \cite{Cenesiz}. One of the limitations of the above two fractional derivatives, is that they do not satisfy the classical chain rule; hence, the need for a better definition of fractional derivative. In \cite{Khalil}, Khalil and his co-authors, introduced a  new well-behaved definition of a fractional derivative known as the conformable fractional derivative, satisfying the usual chain rule, the Rolle's and the mean value theorem, conformable integration by parts, fractional power series expansion and the conformal fractional derivative of a constant real function is zero, etc. These properties  have  given rise to a new research direction. Thus conformable fractional derivative is a natural extension of the classical derivative (since it can be expressed as a first derivative multiplied by a fractional factor or power in some cases) and has many advantages over other fractional derivatives as mentioned above.

\indent The conformable fractional derivative combines the best characteristics of well-known fractional derivatives,  seems more appropriate to describe the behaviour of classical viscoelastic models under interval uncertainty, see \cite{Salahshour} and gives models that agree and are consistent with experimental data, see \cite{Al-Refai}. Conformable fractional derivatives are applied in certain classes of conformable differentiable linear systems subject to impulsive effects and establish quantitative behaviour of the nontrivial solutions (stability, disconjugacy, etc), see \cite{Birgani,Gholami}; and used to develop the Swartzendruber model for description of non-Darcian flow in porous media \cite{Birgani,Yang}. Conformable derivatives are also used to solve approximate long water wave equation with conformable fractional derivative and conformable fractional differential equations via radial basis functions collocation method \cite{Kaplan,Usta}. \\

Though there have been  significant contributions in the study of class of stochastic heat equations with Riemann--Liouville (R--L) and  Caputo--Dzhrbashyan (C--D) fractional derivatives \cite{Foondun,Foondun1, Foondun2,Mijena,Omaba,Omaba1,Omaba2,Walsh},  but not much has been done in the study of stochastic Cauchy equation with conformable fractional derivative, see \cite{Cenesiz}, see also \cite{Meng, Zhong} for  recent studies on  conformable fractional differential equations.

\indent The difference in studying stochastic Cauchy equation with conformable fractional derivative is that there is no singular kernel of the form $(t-s)^{-\alpha}$ generated for $0<\alpha<1$ which reflect the nonlocality and the memory in the fractional operator as in the case of R--L and C--D fractional derivatives. Thus, despite the fact that  it does not possess or satisfy a semigroup property unlike the  R--L and C--D fractional operators that have well-behaved semigroup properties, the use of conformable fractional derivative makes the study of the theory of fractional differential equation easier, see \cite{Abdeljawad,Zhong}.

Due to the above and many other  advantages/importance of conformable derivative, it has become increasingly applicable to the study of both initial and boundary value problems.
 Souahi et al. in \cite{Souahi} studied the following equation as a possible  model for some nonlinear control system
  \begin{equation}\label{eqn01}T_{\alpha,t}^a u(x,t)=f(u(x,t)),\,\,\,\,x\in\mathbb{R},\,0<a<t<\infty,\,\,0<\alpha<1,
\end{equation}
with an initial condition $u(x,0)=u_0(x),\,\,x\in\mathbb{R}$; where $T_{\alpha,t}^a$ is a conformable fractional derivative, $f:\mathbb{R}\rightarrow\mathbb{R}$ is a Lipschitz continuous function.
  They studied the stability and asymptotic stability of equation \eqref{eqn01}, and presented some illustrative
examples to analyse some nonlinear control system in the conformable sense. Asawasamrit et al. in \cite{Asawasamrit} studied the existence of solutions to the boundary value problems for some specific conformable fractional differential equations. In this work  we wish to extend the study of conformable derivative to stochastic equations  and  therefore consider the following class of conformable fractional stochastic equation:
\begin{equation}\label{eqn1}T_{\alpha,t}^a u(x,t)=\lambda\sigma(u(x,t))\dot{W}_t,\,\,\,\,x\in\mathbb{R},\,0<a<t<\infty,\,\,0<\alpha<1,
\end{equation}
with an initial condition $u(x,0)=u_0(x),\,\,x\in\mathbb{R}$; where $T_{\alpha,t}^a$ is a conformable fractional derivative, $\sigma:\mathbb{R}\rightarrow\mathbb{R}$ is a Lipschitz continuous function, $\dot{W}_t$ is a generalized derivative of Wiener process (Gaussian white noise) and $\lambda>0$ is the noise level.
We consequently give the lower moment growth bound and explore the asymptotic behaviour and long-term properties of the solution. We also show that the solution fails to exist globally when the non-linearity term grows faster than linear; thus  our solutions blow up in some sense.

The paper is outlined as follows. Section \ref{200} contains the definitions of basic concepts used; the problem formulation, the main results and their proofs are given in Section \ref{300}, and Section \ref{400} houses a brief conclusion of the paper.

\section{Preliminaries}\label{200}
 For the definition and more  details on conformable fractional derivatives, see \cite{Abdeljawad,Cenesiz, Iyiola, Khalil,Zhao, Zhong} and their references. Let $\mathcal{D}:=[a,\infty)\times\mathbb{R}$.
\begin{dfn} The conformable time-fractional derivative starting from $a$ of a function $u:\mathcal{D}\rightarrow\mathbb{R}$ of order $\alpha$ is
$$T_{\alpha,t}^a u(x,t)=\lim_{\epsilon\rightarrow 0}\frac{u(x,t+\epsilon(t-a)^{1-\alpha})-u(x,t)}{\epsilon}.$$
If $T_{\alpha,t}^a u(x,t)$ exists on $(a,b)$, then $$T_{\alpha,t}^a u(x,a)=\lim_{t\rightarrow a}T_{\alpha,t}^a u(x,t)$$ and if $u_t(x,t)$ exists then $$T_{\alpha,t}^a u(x,t)=(t-a)^{1-\alpha}u_t(x,t).$$
\end{dfn}
\begin{dfn}The fractional integral starting from $a$ of a function $u:\mathcal{D}\rightarrow\mathbb{R}$ of order $\alpha$ is
$$I_{\alpha,t}^a u(x,t)=\int_a^t (s-a)^{\alpha-1}u(x,s)d s.$$
\end{dfn}
In \cite{Zhong}, the authors considered the following problem
\begin{equation}\label{eqn:Com}
T_{\alpha}^a y(t)=f(t,y(t)),\,t\in [a,\infty),\,0<\alpha<1
\end{equation}
 subject to the initial conditions $y(a)=y_a$ or $y(a)+g(a)=y_a$
where $T_{\alpha}^a y(t)$ denotes the conformable fractional derivative starting from $a$ of a function $y$ of order $\alpha$,
$f :[a,\infty) \times \mathbb{R}\rightarrow \mathbb{R}$ is continuous and $g$ is a given functional defined on an appropriate space of functions.
\begin{lemma}[\cite{Zhong}] If  $f$ is Lipschitz continuous, then a function $y$ in $C([a,b])$ is a solution of the initial value problem \eqref{eqn:Com} if and only if it is a continuous solution of the following integral equation
$$y(t)=y_a+I^a_{\alpha}f(t,y(t)),\,t\in [a,b].$$
\end{lemma}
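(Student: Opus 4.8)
The plan is to prove the two implications separately, in each case converting the conformable equation into an ordinary differential equation by means of the identity $T_{\alpha}^{a}y(t)=(t-a)^{1-\alpha}y'(t)$ stated in Section \ref{200} (valid at every $t\in(a,b)$ where $y'$ exists), and then invoking the classical fundamental theorem of calculus. Note that only the continuity of $f$ is actually used for this equivalence; the Lipschitz hypothesis is what will be needed afterwards to pin down uniqueness.

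\textbf{($\Rightarrow$)} Assume $y\in C([a,b])$ solves \eqref{eqn:Com} with $y(a)=y_a$. Then $g(t):=f(t,y(t))$ is continuous, hence bounded on $[a,b]$, say $|g|\le M$. Writing out the defining limit of $T_\alpha^a y(t)$ and substituting $h=\epsilon(t-a)^{1-\alpha}$ shows that for each $t\in(a,b)$ the ordinary derivative $y'(t)$ exists and $y'(t)=(t-a)^{\alpha-1}g(t)$, which is continuous on every interval $[a+\delta,b]$ with $\delta>0$. The fundamental theorem of calculus then gives, for $a<\delta\le t\le b$,
\[
y(t)-y(a+\delta)=\int_{a+\delta}^{t}(s-a)^{\alpha-1}g(s)\,ds .
\]
Since $0<\alpha<1$, the bound $|(s-a)^{\alpha-1}g(s)|\le M(s-a)^{\alpha-1}$ is integrable near $a$; letting $\delta\downarrow 0$ and using continuity of $y$ on the left and dominated convergence on the right yields $y(t)=y_a+\int_a^t(s-a)^{\alpha-1}f(s,y(s))\,ds=y_a+I_\alpha^a f(t,y(t))$.

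\textbf{($\Leftarrow$)} Conversely, let $y\in C([a,b])$ satisfy $y(t)=y_a+I_\alpha^a f(t,y(t))$. Setting $t=a$ gives $y(a)=y_a$. For $t\in(a,b)$ the integrand $s\mapsto(s-a)^{\alpha-1}f(s,y(s))$ is continuous on $[a+\delta,t]$ and integrable up to $a$ by the same estimate, so $t\mapsto I_\alpha^a f(t,y(t))$ is differentiable on $(a,b)$ with derivative $(t-a)^{\alpha-1}f(t,y(t))$; hence $y'(t)=(t-a)^{\alpha-1}f(t,y(t))$ and therefore $T_\alpha^a y(t)=(t-a)^{1-\alpha}y'(t)=f(t,y(t))$ on $(a,b)$. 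Finally, using the convention $T_\alpha^a y(a)=\lim_{t\to a}T_\alpha^a y(t)$ together with the continuity of $f$ and $y$ gives $T_\alpha^a y(a)=f(a,y_a)$, so $y$ solves the initial value problem. The only genuinely delicate point in the whole argument is the passage $\delta\downarrow 0$ at the left endpoint: one must check that the improper integral $\int_a^t(s-a)^{\alpha-1}f(s,y(s))\,ds$ converges, which holds precisely because $\alpha-1>-1$ and $f(\cdot,y(\cdot))$ is bounded on the compact interval $[a,b]$, and that the conformable derivative at the starting point $a$ is read through the limiting convention built into its definition; everything else reduces to the fundamental theorem of calculus applied to the reformulation $y'=(t-a)^{\alpha-1}f(t,y)$.
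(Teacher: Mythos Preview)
Your proof is correct. The paper does not supply its own proof of this lemma (it is quoted from \cite{Zhong}), but the two inversion identities recorded immediately after it --- $T^a_\alpha I^a_{\alpha}f=f$ for continuous $f$ and $I^a_{\alpha}T^a_\alpha f=f-f(a)$ for differentiable $f$ --- are exactly what your argument establishes in the two directions, so your route via the substitution $h=\epsilon(t-a)^{1-\alpha}$ and the classical fundamental theorem of calculus is the standard approach in unpacked form.
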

\begin{remark}
The existence and uniqueness result was proved by Banach's contraction principle on $C(I),\,\,I=[a,b],$ equipped with an appropriate weighted maximum norm.  Zhong  and Wang thus defined a function $e(t)$ by
$$e(t)=e^{-\beta\frac{(t-a)^\alpha}{\alpha}},\,\,\beta\in(L,\infty),\,L>0,$$
and for $y\in C(I),$ it was given that
$\|y\|_\beta=\|e(.)y(.)\|, $ where $\|.\|$ denotes a maximum norm on $C(I)$ with $(C(I),\|.\|_\beta)$ a Banach space, (see  \cite{Zhong} Theorem 3.1).
\end{remark}
\begin{lemma}[\cite{Abdeljawad,Khalil}]Assume that $f:(a,\infty)\rightarrow\mathbb{R}$ is continuous and $0<\alpha\leq 1$. Then, for all $t>a$ we have $$ T^a_\alpha I^a_{\alpha,t}f(t)=f(t).$$
\end{lemma}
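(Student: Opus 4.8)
The plan is to reduce the identity to the classical fundamental theorem of calculus together with the explicit formula $T^a_{\alpha}u(t)=(t-a)^{1-\alpha}u'(t)$ recorded in the first definition. Set $F(t):=I^a_{\alpha,t}f(t)=\int_a^t(s-a)^{\alpha-1}f(s)\,ds$ and fix $t>a$. First I would check that $F$ is well defined: the integrand $s\mapsto(s-a)^{\alpha-1}f(s)$ is continuous on $(a,\infty)$, and near the left endpoint it is dominated by $C(s-a)^{\alpha-1}$ on a bounded subinterval, which is integrable on $(a,t]$ since $\alpha-1>-1$; hence the integral defining $F(t)$ converges for every $t>a$.

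Next I would establish differentiability of $F$ away from $a$. Pick any $t_1$ with $a<t_1<t$ and split
$$F(t)=\int_a^{t_1}(s-a)^{\alpha-1}f(s)\,ds+\int_{t_1}^{t}(s-a)^{\alpha-1}f(s)\,ds .$$
The first summand is a constant with respect to $t$; the integrand of the second summand is continuous on the interval $[t_1,\infty)$, so the fundamental theorem of calculus applies and yields $F'(t)=(t-a)^{\alpha-1}f(t)$ for every $t>a$.

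Finally, since $F_t$ exists, the alternative expression for the conformable derivative from the first definition gives
$$T^a_\alpha I^a_{\alpha,t}f(t)=T^a_\alpha F(t)=(t-a)^{1-\alpha}F'(t)=(t-a)^{1-\alpha}(t-a)^{\alpha-1}f(t)=f(t),$$
which is the claimed identity. The only mildly delicate point is the justification that the possibly singular weight $(s-a)^{\alpha-1}$ causes no trouble — both for convergence of the integral at $s=a$ and for applicability of the fundamental theorem of calculus at points $t>a$ — and this is dispatched by the localization argument above; the rest is a one-line computation.
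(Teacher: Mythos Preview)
Your argument is correct and is the standard one: reduce to the ordinary fundamental theorem of calculus and then use the identity $T^a_\alpha F(t)=(t-a)^{1-\alpha}F'(t)$ for differentiable $F$. The paper itself does not supply a proof of this lemma; it is quoted as a known result from \cite{Abdeljawad,Khalil}, so there is no in-paper argument to compare against.

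One small caveat: your convergence check (``dominated by $C(s-a)^{\alpha-1}$ near the left endpoint'') tacitly assumes $f$ is bounded near $a$, which continuity on the \emph{open} interval $(a,\infty)$ does not by itself guarantee. This does not affect the heart of your proof, since the differentiability of $F$ at any fixed $t>a$ is established entirely by your localization at $t_1\in(a,t)$ and the fundamental theorem of calculus on $[t_1,t]$; the integrability at $s=a$ is really a standing hypothesis implicit in writing $I^a_{\alpha,t}f$ at all.
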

\begin{lemma}[\cite{Abdeljawad}]Let $f:(a,b)\rightarrow\mathbb{R}$ be differentiable and $0<\alpha\leq 1$. Then, for all $t>a$ we have $$I^a_{\alpha,t} T^a_\alpha(f)(t)=f(t)-f(a).$$\end{lemma}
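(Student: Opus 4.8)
\emph{Proof strategy.} The plan is to let the singular weight in the fractional integral cancel the fractional factor produced by the conformable derivative, after which the statement reduces to the fundamental theorem of calculus. Since $f$ is differentiable on $(a,b)$, the definition of the conformable derivative gives the pointwise identity $T^a_\alpha f(s)=(s-a)^{1-\alpha}f'(s)$ for every $s\in(a,b)$. Substituting this into the definition of the fractional integral and using $(s-a)^{\alpha-1}(s-a)^{1-\alpha}=1$ for $s>a$, one obtains
\begin{equation*}
I^a_{\alpha,t}T^a_\alpha f(t)=\int_a^t (s-a)^{\alpha-1}(s-a)^{1-\alpha}f'(s)\,\d s=\int_a^t f'(s)\,\d s .
\end{equation*}
Thus the apparent singularity of the kernel at $s=a$ is neutralized, and the right-hand side is an ordinary integral of $f'$.

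The second step is simply to apply the fundamental theorem of calculus, $\int_a^t f'(s)\,\d s=f(t)-f(a)$, which is precisely the asserted identity; since $t>a$ was arbitrary, this would complete the argument.

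The only point that requires some care is the lower endpoint $s=a$, because the representation $T^a_\alpha f(s)=(s-a)^{1-\alpha}f'(s)$ is asserted only on the open interval $(a,b)$ and $f'$ need not extend to $a$. I would handle this by reading $\int_a^t f'(s)\,\d s$ as the improper integral $\lim_{\epsilon\to 0^+}\int_{a+\epsilon}^t f'(s)\,\d s$, applying the fundamental theorem of calculus on $[a+\epsilon,t]$ to get $f(t)-f(a+\epsilon)$, and then using continuity of $f$ at $a$ (immediate from differentiability, and consistent with the convention $T^a_\alpha f(a)=\lim_{t\to a}T^a_\alpha f(t)$ adopted in the definition) to pass to the limit. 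If instead one assumes $f\in C^1$ or merely $f'\in L^1_{\mathrm{loc}}$, this limiting step is automatic. No other obstruction is expected, since every remaining manipulation is a pointwise algebraic identity rather than an analytic estimate.
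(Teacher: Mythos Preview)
Your argument is correct and is precisely the standard proof: use $T^a_\alpha f(s)=(s-a)^{1-\alpha}f'(s)$ for differentiable $f$, substitute into the fractional integral so that the kernel cancels, and apply the fundamental theorem of calculus. The paper does not supply its own proof of this lemma---it is quoted from \cite{Abdeljawad}---and the original proof there follows exactly the route you describe, so there is nothing to compare beyond noting agreement.
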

Next theorem gives a fractional version of Gronwall inequality
\begin{theorem}[\cite{Abdeljawad}] Let $r$ be a continuous, nonnegative function on an interval $J=[a,b]$ and $\delta$ and $k$ be nonnegative constants such that $$r(t)\leq \delta +k\int_a^t  r(s)(s-a)^{\alpha-1}ds\,\,\,(t\in J).$$
Then for all $t\in J$, $r(t)\leq \delta e^{k\frac{(t-a)^\alpha}{\alpha}}.$
\end{theorem}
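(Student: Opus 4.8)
The plan is to reduce the given integral inequality to a differential one by an integrating-factor argument, exactly as in the classical Gronwall lemma, the only extra care being the singular weight $(s-a)^{\alpha-1}$ at the left endpoint. First I would set
$$R(t)=\delta+k\int_a^t r(s)(s-a)^{\alpha-1}\,ds,\qquad t\in J.$$
Since $r$ is continuous on the compact interval $J$ it is bounded there, and since $\alpha>0$ the weight $(s-a)^{\alpha-1}$ is integrable on $J$; hence $R$ is well defined and continuous on $J$, with $R(a)=\delta$ and, by the hypothesis, $r(t)\le R(t)$ for every $t\in J$.

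Next I would differentiate. The integrand $s\mapsto r(s)(s-a)^{\alpha-1}$ is continuous on $(a,b]$, so by the fundamental theorem of calculus $R$ is differentiable on $(a,b]$ with
$$R'(t)=k\,r(t)(t-a)^{\alpha-1}\le k\,R(t)(t-a)^{\alpha-1},$$
the inequality using $r(t)\le R(t)$ and $k(t-a)^{\alpha-1}\ge 0$. Now introduce the integrating factor $\mu(t)=\exp\!\big(-k(t-a)^\alpha/\alpha\big)$, which satisfies $\mu(a)=1$ and $\mu'(t)=-k(t-a)^{\alpha-1}\mu(t)$; note that the exponent $k(t-a)^\alpha/\alpha$ is exactly the primitive $k\int_a^t(s-a)^{\alpha-1}\,ds$, which is where the factor $e^{k(t-a)^\alpha/\alpha}$ of the conclusion comes from. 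Putting $\phi(t)=\mu(t)R(t)$ gives a function continuous on $J$, differentiable on $(a,b]$, with
$$\phi'(t)=\mu(t)\big(R'(t)-k(t-a)^{\alpha-1}R(t)\big)\le 0.$$

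Finally, a function continuous on $J$ whose derivative is $\le 0$ on the open interval is non-increasing (mean value theorem on $[s,t]$ for $a\le s<t\le b$), so $\phi(t)\le\phi(a)=R(a)=\delta$ for all $t\in J$; that is, $R(t)\le\delta\,e^{k(t-a)^\alpha/\alpha}$, whence $r(t)\le R(t)\le\delta\,e^{k(t-a)^\alpha/\alpha}$, which is the claim. The one genuinely delicate point is the behaviour at $a$: one must verify that $(s-a)^{\alpha-1}$ is integrable (it is, since $\alpha>0$) and that monotonicity of $\phi$ on the closed interval can be deduced from differentiability only on $(a,b]$ together with continuity at $a$. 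An alternative route that bypasses this is Picard-type iteration: substituting the inequality into itself $n$ times yields $r(t)\le\delta\sum_{j=0}^{n}\frac{1}{j!}\big(k(t-a)^\alpha/\alpha\big)^j$ plus a remainder tending to $0$ as $n\to\infty$, again giving the exponential bound; this is more computational but avoids any differentiability discussion. The degenerate case $\delta=0$ needs no separate treatment in either approach, since the integrating-factor argument then yields $\phi\le 0$, hence $r\equiv 0$.
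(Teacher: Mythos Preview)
Your argument is correct, and it is precisely the approach indicated by the paper: the paper does not give a full proof (the theorem is quoted from \cite{Abdeljawad}) but only remarks that $K(t)=e^{-k(t-a)^\alpha/\alpha}$ solves $T^a_\alpha K(t)=-kK(t)$. Your integrating factor $\mu$ is this very $K$, and the identity $\mu'(t)=-k(t-a)^{\alpha-1}\mu(t)$ you use is nothing but $T^a_\alpha K=-kK$ rewritten via $T^a_\alpha K=(t-a)^{1-\alpha}K'$, so your proof is exactly the one the paper's remark points to, with the endpoint issue at $t=a$ handled carefully.
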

\begin{remark}The proof uses the fact that  the function $K(t)=e^{-k\frac{(t-a)^\alpha}{\alpha}}$  solves the following conformable differential equation $T^a_\alpha K(t)=-kK(t).$
\end{remark}
Now, for a definition of  generalized derivative for a deterministic function $w(t)$:
\begin{dfn}Given that $g(t)$ is any smooth and compactly supported function, we define the generalized derivative $\dot{w}(t)$ of $w(t)$ (not necessarily differentiable) as
$$\int_0^\infty g(t)\dot{w}(t)d t=-\int_0^\infty \dot{g}(t)w(t)d t.$$
Similarly, the generalized derivative $\dot{W}_t$ of Wiener process with a smooth function $g(t)$ as follows:
$$\int_0^t g(s)\dot{W}_sd s=g(t)W_t-\int_0^t \dot{g}(s)W_sd s.$$
\end{dfn}

\section{Main Results}\label{300}
For the existence and uniqueness result, we assume that $\sigma$ is globally Lipschitz:
\begin{condition}\label{cond:E-U}
There exists a finite positive constant, $\lip_\sigma$ such that for all $x,\,y\in\mathbb{R}$, we have
\begin{equation*}\label{cond:sigma}
 |\sigma(x)-\sigma(y)|\leq \lip_\sigma|x-y|,
\end{equation*}
with $\sigma(0)=0$ for convenience.
\end{condition}

Define the following norm based on $L^2(\mathbb{P})$ norm $$\|u\|_{2,\alpha,\beta}:=\bigg\{\sup_{t\geq a}\sup_{x\in\mathbb{R}}e^{-\frac{\beta}{\alpha}(t-a)^\alpha}\mathbb{E}|u(x,t)|^2\bigg\}^{1/2}=\bigg\{\sup_{t\geq a}\sup_{x\in\mathbb{R}}e(t)\mathbb{E}|u(x,t)|^2\bigg\}^{1/2},$$ with $e(t):=e^{-\frac{\beta}{\alpha}(t-a)^\alpha}$.  Following  similar ideas in \cite{Abdeljawad, Omaba3,Zhong}, we give the following results:
\begin{lemma}Given that condition \ref{cond:E-U1} below holds, then a function $u$ in $L^2(\mathbb{P})$ is a solution of Equation \eqref{eqn1} if and only if it is a solution of the integral equation $$u(x,t)=u_0(x)+I_{\alpha,t}^a[\lambda\sigma(u(x,t))\dot{W}_t],$$ with $T_{\alpha,t}^a I_{\alpha,t}^a[u(x,t)]=u(x,t)$ for a continuous function $u(x,t)$.
\end{lemma}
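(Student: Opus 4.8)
The plan is to deduce the equivalence from the two inversion identities already recorded for the conformable operators, namely $T^a_\alpha I^a_{\alpha,t}f(t)=f(t)$ and $I^a_{\alpha,t}T^a_\alpha(f)(t)=f(t)-f(a)$, once the object $I_{\alpha,t}^a[\lambda\sigma(u(x,t))\dot{W}_t]$ has been interpreted as the stochastic integral $\lambda\int_a^t(s-a)^{\alpha-1}\sigma(u(x,s))\,dW_s$ in the Walsh--It\^o sense. First I would fix $x\in\mathbb{R}$, note that under Condition \ref{cond:E-U} ($\sigma$ globally Lipschitz with $\sigma(0)=0$, hence of linear growth) together with $u\in L^2(\mathbb{P})$ the map $s\mapsto\sigma(u(x,s))$ is square-integrable in the required sense, and then observe that, by the generalized-derivative convention in the Definition recalled above applied to smooth approximations of the integrand, applying $I_{\alpha,t}^a$ to the white-noise term genuinely produces the stochastic integral against $dW_s$; this identification, together with the integrability that Condition \ref{cond:E-U1} is designed to guarantee, is what the whole argument rests on.

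For the forward implication, suppose $u$ solves \eqref{eqn1}. Applying $I_{\alpha,t}^a$ to both sides of $T_{\alpha,t}^a u(x,t)=\lambda\sigma(u(x,t))\dot{W}_t$ and invoking the lemma $I^a_{\alpha,t}T^a_\alpha(u)(x,t)=u(x,t)-u(x,a)$ with the initial condition $u(x,a)=u_0(x)$, the left side becomes $u(x,t)-u_0(x)$, so $u(x,t)=u_0(x)+I_{\alpha,t}^a[\lambda\sigma(u(x,t))\dot{W}_t]$, which is the integral equation. For the converse, suppose $u$ solves the integral equation and apply $T_{\alpha,t}^a$ to both sides, using linearity of $T_{\alpha,t}^a$: since $u_0(x)$ does not depend on $t$ one has $T_{\alpha,t}^a u_0(x)=0$, while $T^a_\alpha I^a_{\alpha,t}f(t)=f(t)$ with $f(t)=\lambda\sigma(u(x,t))\dot{W}_t$ returns the white-noise term, yielding \eqref{eqn1}; the auxiliary identity $T_{\alpha,t}^a I_{\alpha,t}^a[u(x,t)]=u(x,t)$ for continuous $u$ is precisely that same lemma applied with $f=u$.

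The step I expect to require the most care is the passage between the pathwise/deterministic Lemmas recalled in Section \ref{200} and their application to the stochastic convolution: one must check that $I_{\alpha,t}^a$ and $T_{\alpha,t}^a$ may be interchanged with stochastic integration (and with $\mathbb{E}$), that $t\mapsto u(x,t)$ enjoys the $L^2(\mathbb{P})$-continuity those lemmas presuppose, and --- the genuinely delicate point --- that the kernel $(s-a)^{\alpha-1}$ is sufficiently integrable near $s=a$ for $0<\alpha<1$; in the $L^2$ theory the second moment of the stochastic term involves $\int_a^t(s-a)^{2(\alpha-1)}\,ds$, finite precisely for $\alpha>1/2$, so either a restriction on $\alpha$ or a correspondingly weighted formulation enters through Condition \ref{cond:E-U1}, with the exponential weight $e(t)$ and the norm $\|\cdot\|_{2,\alpha,\beta}$ as in the Remark after the Zhong--Wang lemma used to control behaviour at infinity. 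The remaining bounds (the Lipschitz estimate for $\sigma(u(x,\cdot))$ combined with It\^o isometry) are routine and parallel those in \cite{Abdeljawad,Omaba3,Zhong}.
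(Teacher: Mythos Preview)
The paper does not actually supply a proof of this lemma: it is stated immediately after the sentence ``Following similar ideas in \cite{Abdeljawad,Omaba3,Zhong}, we give the following results,'' and the text then moves directly to writing down the mild solution \eqref{equation-mild}. In other words, the authors treat the lemma as a direct stochastic analogue of the deterministic equivalence (the Zhong--Wang lemma quoted in Section~\ref{200}) and defer the details to the cited references.

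Your argument is precisely that analogue: apply $I_{\alpha,t}^a$ to both sides and use $I^a_{\alpha,t}T^a_\alpha f=f-f(a)$ for one direction, apply $T_{\alpha,t}^a$ and use $T^a_\alpha I^a_{\alpha,t}f=f$ for the other, after identifying $I_{\alpha,t}^a[\lambda\sigma(u)\dot W_t]$ with the It\^o integral $\lambda\int_a^t(s-a)^{\alpha-1}\sigma(u(x,s))\,dW_s$. This is exactly the route the paper's citation of \cite{Abdeljawad,Zhong} points to, so your proposal is consistent with---indeed more explicit than---what the paper offers. Your closing remarks about the integrability constraint $\alpha>1/2$ and the role of Condition~\ref{cond:E-U1} are also well placed: the paper itself only ever uses the lemma in the regime $\alpha>1/2$ (see Lemma~\ref{lem:exists} and Theorem~\ref{growth-bound1}), which is where the $L^2$ theory makes the stochastic convolution well defined.
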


Thus, the  mild solution to Equation \eqref{eqn1} is given as follows
\begin{eqnarray}
u(x,t)&=&u_0(x)+\lambda\int_a^t (s-a)^{\alpha-1}\sigma(u(x,s))\dot{W}_s d s\nonumber\\
&=&u_0(x)+\lambda\int_a^t (s-a)^{\alpha-1}\sigma(u(x,s)) d W_s.\label{equation-mild}
\end{eqnarray}

The next condition on $u$  follows from the assumption of Lipschitz continuity of $\sigma$:
\begin{condition}\label{cond:E-U1}The random solution $u:\mathcal{D}\rightarrow\mathbb{R}$ is $L^2$ - continuous ( or continuous in probability).
\end{condition}

\begin{theorem}Suppose that $\alpha>\frac{1}{2}$,  and   $\beta>{(\lambda\lip_\sigma)^2}$ for positive constants $\lip_\sigma$ and $\lambda$. Suppose also that  both Conditions \ref{cond:E-U} and \ref{cond:E-U1} hold. Then there exists   a solution $u$ of equation \eqref{eqn1} that is unique up to modification.
\end{theorem}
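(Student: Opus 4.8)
The plan is to obtain the mild solution \eqref{equation-mild} as the unique fixed point of the stochastic Volterra operator
\[
(\sA u)(x,t):=u_0(x)+\lambda\int_a^t(s-a)^{\alpha-1}\sigma(u(x,s))\,dW_s ,
\]
acting on the space $\sB$ of jointly measurable, adapted, $L^2$--continuous random fields $u:\sD\to\mathbb{R}$ with $\|u\|_{2,\alpha,\beta}<\infty$, metrized by $\|\cdot\|_{2,\alpha,\beta}$. Note first that the hypothesis $\alpha>\tfrac12$ is exactly what makes $\int_a^t(s-a)^{2\alpha-2}\,ds<\infty$, so that, by the It\^o isometry and Condition \ref{cond:E-U}, the stochastic integral in $\sA u$ has finite second moment whenever $u\in\sB$; this is also what is needed for \eqref{equation-mild} to be meaningful at all. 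The space $(\sB,\|\cdot\|_{2,\alpha,\beta})$ is complete (a Cauchy sequence converges in $L^2(\mathbb{P})$ uniformly on compacts, and adaptedness, joint measurability and $L^2$--continuity pass to the limit), so it suffices to show $\sA$ maps $\sB$ into itself and is a strict contraction, and then invoke Banach's fixed point theorem; equivalently, one runs the Picard scheme $u^{(0)}\equiv u_0$, $u^{(n+1)}=\sA u^{(n)}$ and shows it is Cauchy in $\|\cdot\|_{2,\alpha,\beta}$.

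Both the self-map property and the contraction reduce to one deterministic kernel estimate. For $u,v\in\sB$, the It\^o isometry and Condition \ref{cond:E-U} (with $\sigma(0)=0$ and $|\sigma(x)-\sigma(y)|\le\lip_\sigma|x-y|$) give
\[
\mathbb{E}|(\sA u)(x,t)|^2\le 2\|u_0\|_\infty^2+2\lambda^2\lip_\sigma^2\int_a^t(s-a)^{2\alpha-2}\,\mathbb{E}|u(x,s)|^2\,ds
\]
and
\[
\mathbb{E}|(\sA u)(x,t)-(\sA v)(x,t)|^2\le\lambda^2\lip_\sigma^2\int_a^t(s-a)^{2\alpha-2}\,\mathbb{E}|u(x,s)-v(x,s)|^2\,ds .
\]
Multiplying through by $e(t)=e^{-\frac\beta\alpha(t-a)^\alpha}$, using $\mathbb{E}|w(x,s)|^2\le e(s)^{-1}\|w\|_{2,\alpha,\beta}^2$ and $e(t)\le1$, and taking $\sup_{x,t}$, everything comes down to bounding
\[
\Theta(\alpha,\beta):=\sup_{t\ge a}\ e(t)\int_a^t(s-a)^{2\alpha-2}e(s)^{-1}\,ds .
\]
The substitution $r=(s-a)^\alpha$ turns the inner integral into $\tfrac1\alpha\int_0^{(t-a)^\alpha}r^{1-1/\alpha}\,e^{-\frac\beta\alpha((t-a)^\alpha-r)}\,dr$, which is finite precisely because $\alpha>\tfrac12$ makes $1-\tfrac1\alpha>-1$, and one checks it is bounded uniformly in $t$ by $\tfrac1\beta$ (this mirrors the weighted-norm estimate used for the deterministic problem by Zhong and Wang recalled above). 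Hence $\|\sA u\|_{2,\alpha,\beta}^2\le 2\|u_0\|_\infty^2+\tfrac{2(\lambda\lip_\sigma)^2}{\beta}\|u\|_{2,\alpha,\beta}^2<\infty$, so $\sA:\sB\to\sB$ (its $L^2$--continuity following from dominated convergence and continuity of the It\^o integral), and $\|\sA u-\sA v\|_{2,\alpha,\beta}^2\le\tfrac{(\lambda\lip_\sigma)^2}{\beta}\|u-v\|_{2,\alpha,\beta}^2$, so $\sA$ is a strict contraction precisely because $\beta>(\lambda\lip_\sigma)^2$.

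Banach's fixed point theorem then yields a unique $u\in\sB$ with $u=\sA u$, i.e. solving \eqref{equation-mild}; by the integral-equation lemma above this $u$ solves \eqref{eqn1} in the conformable sense, using $T_{\alpha,t}^aI_{\alpha,t}^a=\mathrm{id}$. For the uniqueness statement "up to modification" among all $L^2$--continuous solutions (not just those a priori in $\sB$), I would note that any such solution satisfies, by the It\^o isometry and Condition \ref{cond:E-U}, $\sup_{x}\mathbb{E}|u(x,t)|^2\le 2\|u_0\|_\infty^2+2(\lambda\lip_\sigma)^2\int_a^t(s-a)^{2\alpha-2}\sup_x\mathbb{E}|u(x,s)|^2\,ds$, so the fractional Gronwall inequality stated above (with kernel exponent $2\alpha-1\in(0,1)$ in place of $\alpha-1$) forces $\sup_x\mathbb{E}|u(x,t)|^2$ to grow at most like $e^{c(t-a)^{2\alpha-1}}$; in particular $\|u\|_{2,\alpha,\beta}<\infty$, so $u\in\sB$ and the uniqueness in $\sB$ applies.

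The main obstacle is the kernel estimate $\Theta(\alpha,\beta)\le\tfrac1\beta$: one must simultaneously tame the singular factor $(s-a)^{2\alpha-2}$ at $s=a$ (this is where $\alpha>\tfrac12$ is indispensable) and extract the exact decay rate in $\beta$ from the exponential weight, since it is this constant that pins down the threshold $\beta>(\lambda\lip_\sigma)^2$. Everything else is the standard Picard/Banach machinery, modulo routine care that the integrand $(s-a)^{\alpha-1}\sigma(u(x,s))$ is a legitimate (predictable, square-integrable) It\^o integrand — which is where Condition \ref{cond:E-U1} is used — and that passing to the weighted norm does not discard genuine solutions.
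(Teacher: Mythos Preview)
Your overall strategy---Banach fixed point for the Volterra operator $\sA$ in a weighted $L^2(\mathbb{P})$-sup norm---is exactly the paper's, and your side remarks (orthogonality in the self-map bound, the a posteriori Gronwall argument to pull arbitrary $L^2$-continuous solutions into $\sB$) are fine. The gap is in the kernel estimate $\Theta(\alpha,\beta)\le 1/\beta$, which is \emph{false} for the weight $e(t)=\exp\bigl(-\tfrac{\beta}{\alpha}(t-a)^\alpha\bigr)$ you have chosen. After the substitution you indicate, one finds
\[
\Theta(\alpha,\beta)=\frac{1}{\beta}\,\Bigl(\frac{\alpha}{\beta}\Bigr)^{1-1/\alpha}\sup_{Y\ge 0}\,e^{-Y}\!\int_0^Y y^{\,1-1/\alpha}e^{y}\,dy
= C_\alpha\,\beta^{-(2-1/\alpha)},
\]
and for $\tfrac12<\alpha<1$ the exponent $2-1/\alpha$ lies in $(0,1)$, so $\Theta$ decays \emph{slower} than $1/\beta$; in particular $\Theta(\alpha,\beta)>1/\beta$ for all large $\beta$. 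Thus the contraction constant $(\lambda\lip_\sigma)^2\Theta$ is not $<1$ under the stated hypothesis $\beta>(\lambda\lip_\sigma)^2$, and the threshold in the theorem cannot be recovered with your weight. The analogy with Zhong--Wang breaks because It\^o isometry doubles the kernel exponent from $\alpha-1$ to $2\alpha-2$, while you kept the deterministic weight tuned to $\alpha$.

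The paper fixes this by working instead with the weight $a(t)=\exp\bigl(-\tfrac{\beta}{2\alpha-1}(t-a)^{2\alpha-1}\bigr)$, i.e.\ the norm $\|\cdot\|_{2,\,2\alpha-1,\,\beta}$, which is matched to the squared kernel $(s-a)^{2\alpha-2}$. Then the substitution $y=\tfrac{\beta}{2\alpha-1}(s-a)^{2\alpha-1}$ turns the integral into an exact antiderivative,
\[
a(t)\int_a^t (s-a)^{2\alpha-2}a(s)^{-1}\,ds=\frac{1}{\beta}\bigl(1-a(t)\bigr)\le\frac{1}{\beta},
\]
giving the clean contraction constant $(\lambda\lip_\sigma)^2/\beta$ and hence the threshold $\beta>(\lambda\lip_\sigma)^2$ on the nose. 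Replacing $e(t)$ by $a(t)$ throughout your argument (and dropping the superfluous factor $2$ in the self-map bound, since $u_0$ and the It\^o integral are $L^2$-orthogonal) makes your proof coincide with the paper's.
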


The proof of the theorem is based on the following lemmas. Now define the operator
$$\sA u(x,t)=u_0(x)+\lambda\int_a^t (s-a)^{\alpha-1}\sigma(u(x,s))d W_s,$$ and the fixed point of the operator $\sA$ gives the solution of Equation \eqref{eqn1}. We now state the following lemmas, see also $\cite{Omaba3}.$

\begin{lemma}\label{lem:exists}Let $\alpha>1/2$. Suppose $u$ is a predictable random solution such that $\|u\|_{2,2\alpha-1,\beta}<\infty$ and  Conditions  \ref{cond:E-U} and \ref{cond:E-U1} hold. Then for $\beta>0$,
$$\|\sA u\|^2_{2,2\alpha-1,\beta}\leq c_1+\frac{\lambda^2\lip_\sigma^2}{\beta}\|u\|^2_{2,2\alpha-1,\beta}.$$
\end{lemma}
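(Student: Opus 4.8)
\emph{Proof proposal.} The plan is to estimate $\mathbb{E}|\sA u(x,t)|^2$ pointwise in $(x,t)\in\mathcal{D}$, then divide by the weight $e^{-\frac{\beta}{2\alpha-1}(t-a)^{2\alpha-1}}$ and take the supremum over $x$ and $t$. The first observation is that $u_0$ is non-random and the stochastic integral $J(x,t):=\int_a^t(s-a)^{\alpha-1}\sigma(u(x,s))\,dW_s$ is a mean-zero martingale (well defined because $u$ is predictable by Condition \ref{cond:E-U1} and $\alpha>\tfrac12$), so there is no cross term:
\[
\mathbb{E}|\sA u(x,t)|^2=|u_0(x)|^2+\lambda^2\,\mathbb{E}|J(x,t)|^2 .
\]
It is precisely the absence of this cross term that produces the constant $\lambda^2\lip_\sigma^2/\beta$ with no extra factor. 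By the It\^o isometry, $\mathbb{E}|J(x,t)|^2=\int_a^t(s-a)^{2\alpha-2}\,\mathbb{E}|\sigma(u(x,s))|^2\,ds$; here the hypothesis $\alpha>\tfrac12$ is used again, since then $2\alpha-2>-1$ and the kernel $(s-a)^{2\alpha-2}$ is integrable at the endpoint $s=a$.

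Next I would apply Condition \ref{cond:E-U} with $\sigma(0)=0$ to get $\mathbb{E}|\sigma(u(x,s))|^2\le\lip_\sigma^2\,\mathbb{E}|u(x,s)|^2$, and then insert and remove the weight, using $\mathbb{E}|u(x,s)|^2\le e^{\frac{\beta}{2\alpha-1}(s-a)^{2\alpha-1}}\,\|u\|^2_{2,2\alpha-1,\beta}$ by definition of the norm. This leaves the purely deterministic integral $\int_a^t(s-a)^{2\alpha-2}e^{\frac{\beta}{2\alpha-1}(s-a)^{2\alpha-1}}\,ds$, which I would evaluate exactly via the substitution $w=(s-a)^{2\alpha-1}$, $dw=(2\alpha-1)(s-a)^{2\alpha-2}\,ds$, obtaining
\[
\int_a^t(s-a)^{2\alpha-2}e^{\frac{\beta}{2\alpha-1}(s-a)^{2\alpha-1}}\,ds=\frac1\beta\Big(e^{\frac{\beta}{2\alpha-1}(t-a)^{2\alpha-1}}-1\Big)\le\frac1\beta\,e^{\frac{\beta}{2\alpha-1}(t-a)^{2\alpha-1}} .
\]
Combining the three displays gives $\mathbb{E}|\sA u(x,t)|^2\le\sup_{y\in\mathbb{R}}|u_0(y)|^2+\frac{\lambda^2\lip_\sigma^2}{\beta}\,\|u\|^2_{2,2\alpha-1,\beta}\,e^{\frac{\beta}{2\alpha-1}(t-a)^{2\alpha-1}}$; multiplying by the weight, taking $\sup_{t\ge a}\sup_{x\in\mathbb{R}}$, and setting $c_1:=\sup_{y\in\mathbb{R}}|u_0(y)|^2$ (finite because $u_0$ is bounded) yields the asserted inequality.

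The computation is routine once the order of operations is fixed; the one point that really requires care is the behaviour of the kernel singularity $(s-a)^{2\alpha-2}$ at $s=a$, and checking that $\int_a^t(s-a)^{2\alpha-2}\mathbb{E}|\sigma(u(x,s))|^2\,ds<\infty$ so that $J(x,t)$ is genuinely a square-integrable It\^o integral --- this is exactly where $\alpha>1/2$ and the finiteness of $\|u\|_{2,2\alpha-1,\beta}$ are needed, and it is why the argument is phrased in the $L^2(\mathbb{P})$ norm (so that the It\^o isometry, rather than a Burkholder--Davis--Gundy estimate, closes the exponent bookkeeping cleanly). A minor measurability issue --- that $s\mapsto\mathbb{E}|u(x,s)|^2$ is measurable --- is covered by the $L^2$-continuity in Condition \ref{cond:E-U1}.
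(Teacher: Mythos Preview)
Your proof is correct and follows essentially the same route as the paper: It\^o isometry, the Lipschitz bound on $\sigma$, insertion of the exponential weight, and the exact evaluation of $\int_a^t (s-a)^{2\alpha-2}e^{\frac{\beta}{2\alpha-1}(s-a)^{2\alpha-1}}\,ds=\frac{1}{\beta}\big(e^{\frac{\beta}{2\alpha-1}(t-a)^{2\alpha-1}}-1\big)$, followed by taking suprema. Your version is in fact a bit more careful than the paper's in explaining why the cross term vanishes (mean-zero stochastic integral against a deterministic initial datum) and why $\alpha>1/2$ is needed for integrability of the kernel at $s=a$, but the argument is the same.
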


\begin{proof}Since $u_0$ is bounded,  by It\^o isometry  we  have
\begin{eqnarray*}
\mathbb{E}|\sA u(x,t)|^2&\leq& c_1+\lambda^2\lip_\sigma^2\int_a^t (s-a)^{2(\alpha-1)}\mathbb{E}|u(x,s)|^2d s.
\end{eqnarray*}
Let $a(t)=\exp(-\frac{\beta}{2\alpha-1}(t-a)^{2\alpha-1})$ and multiply both sides of the equation above  by $a(t)$ to obtain
\begin{eqnarray*}
a(t)\mathbb{E}|\sA u(x,t)|^2&\leq& c_1a(t)\\&+&\lambda^2\lip_\sigma^2 a(t)\int_a^t (s-a)^{2(\alpha-1)}a^{-1}(s)a(s)\mathbb{E}|u(x,s)|^2d s\\
&\leq& c_1a(t)+\lambda^2\lip_\sigma^2a(t)\|u\|^2_{2,2\alpha-1,\beta}\int_a^t (s-a)^{2(\alpha-1)}a^{-1}(s)d s\\
&\leq& c_1a(t)+\lambda^2\lip_\sigma^2a(t)\|u\|^2_{2,2\alpha-1,\beta}I^a_{2\alpha-1,t}a^{-1}(t),\\
&=& c_1a(t)+\frac{\lambda^2\lip_\sigma^2}{\beta}\|u\|^2_{2,2\alpha-1,\beta}a(t)\big(a^{-1}(t)-1\big)\\
&=& c_1+\frac{\lambda^2\lip_\sigma^2}{\beta}\|u\|^2_{2,2\alpha-1,\beta},
\end{eqnarray*}
where we used the fact that $2\alpha-1>0$ (i.e. $\alpha>\frac{1}{2}$) and  $a(t)\leq1,\,\,a\leq t$  (i.e $0\leq t-a\Rightarrow -\frac{\beta}{2\alpha-1}(t-a)^{2\alpha-1}\leq 0$). Thus taking $\sup$ over $t\in[a,\infty)$ and $x\in\mathbb{R}$  we have
\begin{eqnarray*}
\|\sA u\|^2_{2,2\alpha-1,\beta}&\leq& c_1+\frac{\lambda^2\lip_\sigma^2}{\beta}\|u\|^2_{2,2\alpha-1,\beta}.
\end{eqnarray*}

\end{proof}

\begin{remark}The following estimate was used in the proof of the Lemma \ref{lem:exists}
\begin{eqnarray*}
I^a_{2\alpha-1,t}a^{-1}(t)&=&\int_a^t (s-a)^{2(\alpha-1)}e^{\frac{\beta}{2\alpha-1}(s-a)^{2\alpha-1}}d s\\&=&\frac{1}{\beta}\int_0^{\frac{\beta}{2\alpha-1}(t-a)^{2\alpha-1}}e^y dy=\frac{1}{\beta}\big[a^{-1}(t)-1\big].
\end{eqnarray*}
\end{remark}
\begin{lemma}Suppose $u$ and $v$ are predictable random solutions such that $\|u\|_{2,2\alpha-1,\beta}+\|v\|_{2,2\alpha-1,\beta}<\infty$ and  Conditions  \ref{cond:E-U} and \ref{cond:E-U1} hold. Then for $\beta>0$,
$$\|\sA u-\sA v\|^2_{2,2\alpha-1,\beta}\leq \frac{\lambda^2\lip_\sigma^2}{\beta}\|u-v\|^2_{2,2\alpha-1,\beta}.$$
\end{lemma}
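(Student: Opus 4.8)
The plan is to mirror almost verbatim the proof of Lemma \ref{lem:exists}, the key simplification being that the initial-data term $u_0(x)$ cancels in the difference $\sA u-\sA v$, so that no additive constant survives and the bound is purely contractive in $\|u-v\|_{2,2\alpha-1,\beta}$. First I would write, from the definition of $\sA$,
$$\sA u(x,t)-\sA v(x,t)=\lambda\int_a^t (s-a)^{\alpha-1}\big[\sigma(u(x,s))-\sigma(v(x,s))\big]\,dW_s,$$
which is a legitimate It\^o integral: $u,v$ are predictable, Condition \ref{cond:E-U} makes the integrand a predictable process, and the hypotheses $\|u\|_{2,2\alpha-1,\beta}+\|v\|_{2,2\alpha-1,\beta}<\infty$ together with $\alpha>1/2$ (so that $(s-a)^{2(\alpha-1)}$ is integrable at $s=a$) guarantee square integrability on each $[a,t]$. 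Applying the It\^o isometry and then Condition \ref{cond:E-U} gives
$$\mathbb{E}|\sA u(x,t)-\sA v(x,t)|^2=\lambda^2\int_a^t (s-a)^{2(\alpha-1)}\mathbb{E}|\sigma(u(x,s))-\sigma(v(x,s))|^2\,ds\leq \lambda^2\lip_\sigma^2\int_a^t (s-a)^{2(\alpha-1)}\mathbb{E}|u(x,s)-v(x,s)|^2\,ds.$$

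Next I would multiply through by $a(t)=\exp\!\big(-\tfrac{\beta}{2\alpha-1}(t-a)^{2\alpha-1}\big)$, insert the factor $1=a^{-1}(s)a(s)$ under the integral, and bound $a(s)\mathbb{E}|u(x,s)-v(x,s)|^2\leq\|u-v\|^2_{2,2\alpha-1,\beta}$, obtaining
$$a(t)\,\mathbb{E}|\sA u(x,t)-\sA v(x,t)|^2\leq \lambda^2\lip_\sigma^2\,\|u-v\|^2_{2,2\alpha-1,\beta}\,a(t)\int_a^t (s-a)^{2(\alpha-1)}a^{-1}(s)\,ds=\lambda^2\lip_\sigma^2\,\|u-v\|^2_{2,2\alpha-1,\beta}\,a(t)\,I^a_{2\alpha-1,t}a^{-1}(t).$$
Then, using the elementary identity $I^a_{2\alpha-1,t}a^{-1}(t)=\tfrac1\beta\big(a^{-1}(t)-1\big)$ recorded in the remark following Lemma \ref{lem:exists}, and the trivial bound $a(t)\leq 1$ for $t\geq a$, I conclude $a(t)\,\mathbb{E}|\sA u(x,t)-\sA v(x,t)|^2\leq \tfrac{\lambda^2\lip_\sigma^2}{\beta}\|u-v\|^2_{2,2\alpha-1,\beta}$, and taking the supremum over $t\geq a$ and $x\in\mathbb{R}$ yields the claim.

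There is no real obstacle; the computation is a direct transcription of Lemma \ref{lem:exists} with $u_0$ absent. The only point deserving care is the justification of the It\^o isometry step — that the stochastic integral defining $\sA u-\sA v$ is genuinely a square-integrable martingale on each $[a,t]$ — which is precisely where the restriction $\alpha>1/2$ (integrability of $(s-a)^{2(\alpha-1)}$ at the left endpoint) and the finiteness of the weighted norms are used. Everything else is routine algebra with the weight $a(t)$.
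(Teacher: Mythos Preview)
Your proposal is correct and is exactly the argument the paper intends: the lemma is stated without proof precisely because it is the obvious transcription of the proof of Lemma~\ref{lem:exists} with the $u_0$ term absent, and your write-up reproduces that computation step for step, including the use of the identity $I^a_{2\alpha-1,t}a^{-1}(t)=\tfrac1\beta(a^{-1}(t)-1)$ and the bound $a(t)\leq 1$.
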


\begin{remark}\label{remark-upper-bounds}
Taking $\beta$ large enough, by fixed point theorem, we have $u(x,t)=\sA u(x,t)$ and
\begin{eqnarray*}
\|u\|^2_{2,2\alpha-1,\beta}=\|\sA u\|^2_{2,2\alpha-1,\beta}\leq c_1+\frac{\lambda^2\lip_\sigma^2}{\beta}\|u\|^2_{2,2\alpha-1,\beta}
\end{eqnarray*}
which follows that $$\|u\|^2_{2,2\alpha-1,\beta}\big[1-\frac{\lambda^2\lip_\sigma^2}{\beta}\big]\leq c_1\Rightarrow \|u\|_{2,2\alpha-1,\beta}<\infty \Leftrightarrow \beta>(\lambda\lip_\sigma)^2.$$
Similarly,
$$\|u-v\|^2_{2,2\alpha-1,\beta}=\|\sA u-\sA v\|^2_{2,2\alpha-1,\beta}\leq \frac{\lambda^2\lip_\sigma^2}{\beta}\|u-v\|^2_{2,2\alpha-1,\beta},$$ thus $\|u-v\|^2_{2,2\alpha-1,\beta}\big[1-\frac{\lambda^2\lip_\sigma^2}{\beta}\big]\leq 0$ and therefore $\|u-v\|_{2,2\alpha-1,\beta}<0$ if and only if $\beta>{(\lambda\lip_\sigma)^2}.$ The existence and uniqueness result follows by Banach's contraction principle.
\end{remark}

The upper growth moment bound (upper bound estimate) on the solution was given with the assumption that the initial condition is bounded above as follows:
\begin{theorem}\label{growth-bound1} Given that Conditions \ref{cond:E-U} and \ref{cond:E-U1} hold, then for $t\in[a,\infty)$ we have
$$\sup_{x\in\mathbb{R}}\mathbb{E}|u(x,t)|^2\leq c_1\exp\big(c_2(t-a)^{2\alpha-1}\big),$$ for some positive constants $c_1$ and $c_2=\frac{\lambda^2\lip_\sigma^2}{2\alpha-1},\,\,\alpha>\frac{1}{2}.$
\end{theorem}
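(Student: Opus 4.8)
The plan is to turn the second moment of the mild solution into a scalar Volterra integral inequality with kernel $(s-a)^{(2\alpha-1)-1}$, and then close it with the fractional Gronwall inequality stated above, applied with the order parameter $2\alpha-1$ in place of $\alpha$.

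First I would start from the mild formulation \eqref{equation-mild}. Since $u_0$ is non-random and the It\^o integral is a mean-zero martingale, the cross term vanishes, and It\^o's isometry gives, for every $x\in\mathbb{R}$ and $t\in[a,\infty)$,
\begin{equation*}
\mathbb{E}|u(x,t)|^2 \le c_1 + \lambda^2\int_a^t (s-a)^{2(\alpha-1)}\,\mathbb{E}|\sigma(u(x,s))|^2\,d s,
\qquad c_1:=\sup_{x\in\mathbb{R}}|u_0(x)|^2<\infty .
\end{equation*}
Here the hypothesis $\alpha>\tfrac12$ is exactly what makes $(s-a)^{2(\alpha-1)}$ integrable near $s=a$, so that the right-hand side is meaningful; finiteness of the integrand for each fixed $t$ is inherited from the existence theorem, whose fixed point satisfies $\|u\|_{2,2\alpha-1,\beta}<\infty$.

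Next I would invoke Condition \ref{cond:E-U} together with $\sigma(0)=0$, which gives $|\sigma(u)|\le\lip_\sigma|u|$ and hence $\mathbb{E}|\sigma(u(x,s))|^2\le\lip_\sigma^2\,\mathbb{E}|u(x,s)|^2$. Setting $H(t):=\sup_{x\in\mathbb{R}}\mathbb{E}|u(x,t)|^2$ and taking the supremum over $x$ in the displayed inequality yields
\begin{equation*}
H(t)\le c_1 + \lambda^2\lip_\sigma^2\int_a^t (s-a)^{(2\alpha-1)-1}H(s)\,d s,\qquad t\in[a,\infty).
\end{equation*}
This is precisely the hypothesis of the fractional Gronwall inequality with order $2\alpha-1\in(0,1)$, constant $\delta=c_1$ and $k=\lambda^2\lip_\sigma^2$; applying it on an arbitrary interval $[a,b]$ and letting $b\uparrow\infty$ produces
\begin{equation*}
H(t)\le c_1\exp\!\Big(\frac{\lambda^2\lip_\sigma^2}{2\alpha-1}(t-a)^{2\alpha-1}\Big),
\end{equation*}
which is the claim with $c_2=\lambda^2\lip_\sigma^2/(2\alpha-1)$.

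The only point requiring real care is verifying the hypotheses of the Gronwall theorem, namely that $H$ is finite and continuous on each compact $[a,b]$. Finiteness for each $t$ follows from $\|u\|_{2,2\alpha-1,\beta}<\infty$, which bounds $H(t)$ by $c_1 e^{\frac{\beta}{2\alpha-1}(t-a)^{2\alpha-1}}$; continuity of $t\mapsto\mathbb{E}|u(x,t)|^2$ is Condition \ref{cond:E-U1}, and since the resulting bound is uniform in $x$, passing to the supremum over $x$ does not spoil it. I expect this to be the main (albeit minor) obstacle; the remaining estimate is the routine It\^o-isometry-plus-Lipschitz computation already performed in the proof of Lemma \ref{lem:exists}.
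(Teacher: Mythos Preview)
Your proposal is correct and follows essentially the same route as the paper: It\^o isometry plus the Lipschitz bound $|\sigma(u)|\le\lip_\sigma|u|$ yield the Volterra inequality for $\sup_x\mathbb{E}|u(x,t)|^2$, and the fractional Gronwall inequality (with order $2\alpha-1$) closes it. You are more careful than the paper about checking the continuity/finiteness hypotheses of Gronwall, but the argument is the same.
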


\begin{proof} 
Assume that the initial condition $u_0(x)$ is bounded, then by It\^{o} isometry, we have
\begin{eqnarray*}
\mathbb{E}|u(x,t)|^2&\leq& |u_0(x)|^2+\int_a^t (s-a)^{2(\alpha-1)}\mathbb{E}|\sigma(u(x,s))|^2d s\\
&\leq& c_1+\lambda^2\lip_\sigma^2\int_a^t (s-a)^{2(\alpha-1)}\E|u(x,s)|^2\d s.
\end{eqnarray*}
Let $g(t):=\displaystyle\sup_{x\in\mathbb{R}}\mathbb{E}|u(x,t)|^2$, then by Gronwall's inequality,
\begin{eqnarray*}
g(t)&\leq& c_1+\lambda^2\lip_\sigma^2\int_a^t (s-a)^{2(\alpha-1)}g(s)d s\\
&\leq& c_1\exp\bigg[\lambda^2\lip_\sigma^2\int_a^t (s-a)^{2(\alpha-1)}d s\bigg]\\
&=&c_1\exp\bigg[\frac{\lambda^2\lip_\sigma^2}{2\alpha-1}(t-a)^{2\alpha-1}\bigg]
\end{eqnarray*}and the result follows.
\end{proof}
\subsection{Lower Moment Bound}
Similarly, we have the lower bound estimate by assuming that the initial condition $u_0(x)>c_3$ for some positive constant $c_3$ and
\begin{condition}\label{cond:E-U2}
There exist a finite positive constant, $L_\sigma$ such that for all $x\in\mathbb{R}$, we have
\begin{equation*}\label{cond:sigma}
 |\sigma(x)|\geq L_\sigma|x|.
\end{equation*}
\end{condition}Thus
\begin{theorem}\label{growth-bound2} Given that Conditions \ref{cond:E-U1} and \ref{cond:E-U2} hold, then for $t\in[a,\infty)$ we have
\begin{equation}\label{powers1}
\inf_{x\in\mathbb{R}}\mathbb{E}|u(x,t)|^2\geq c_3\exp\big(c_4(t-a)^{2\alpha-1}\big)
\end{equation}
for some positive constants $c_3$ and $c_4=\frac{\lambda^2 L_\sigma^2}{2\alpha-1},\,\,\alpha>\frac{1}{2}.$
\end{theorem}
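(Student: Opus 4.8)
The plan is to run the argument of Theorem~\ref{growth-bound1} in reverse, replacing every upper estimate by the matching lower one. Starting from the mild formulation \eqref{equation-mild}, I would first observe that $u_0(x)$ is non-random and $s\mapsto\int_a^s(r-a)^{\alpha-1}\sigma(u(x,r))\,\d W_r$ is a mean-zero martingale, so the cross term disappears and It\^o's isometry yields the \emph{exact} identity
\[
\mathbb{E}|u(x,t)|^2 = |u_0(x)|^2 + \lambda^2\int_a^t (s-a)^{2(\alpha-1)}\,\mathbb{E}|\sigma(u(x,s))|^2\,\d s .
\]
Squaring Condition~\ref{cond:E-U2} gives $|\sigma(y)|^2\ge L_\sigma^2|y|^2$, hence $\mathbb{E}|\sigma(u(x,s))|^2\ge L_\sigma^2\,\mathbb{E}|u(x,s)|^2$; combining this with the lower bound on $u_0$ (after relabelling the constant, $|u_0(x)|^2\ge c_3$) gives
\[
\mathbb{E}|u(x,t)|^2 \ge c_3 + \lambda^2 L_\sigma^2\int_a^t (s-a)^{2(\alpha-1)}\,\mathbb{E}|u(x,s)|^2\,\d s .
\]

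Next I would introduce $h(t):=\inf_{x\in\mathbb{R}}\mathbb{E}|u(x,t)|^2$. The weight $(s-a)^{2(\alpha-1)}=(s-a)^{(2\alpha-1)-1}$ is nonnegative and, because $\alpha>\tfrac12$, integrable at $s=a$, so replacing $\mathbb{E}|u(x,s)|^2$ by $h(s)$ only lowers the right-hand side; the resulting bound no longer depends on $x$, so taking the infimum over $x$ on the left produces the self-referential inequality
\[
h(t)\ \ge\ c_3 + \lambda^2 L_\sigma^2\int_a^t (s-a)^{2(\alpha-1)}\,h(s)\,\d s \ =\ c_3 + \lambda^2 L_\sigma^2\, I^a_{2\alpha-1,t}\,h(t).
\]
That $h$ is finite-valued and regular enough for the following step is guaranteed by Condition~\ref{cond:E-U1}, which makes $t\mapsto\mathbb{E}|u(x,t)|^2$ continuous, together with the finite upper bound already established in Theorem~\ref{growth-bound1}.

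The final step is a reverse Gronwall argument. I would set $G(t):=c_3+\lambda^2 L_\sigma^2\, I^a_{2\alpha-1,t}h(t)$, so that $h\ge G\ge c_3>0$ and $G(a)=c_3$; applying the identity $T^a_{2\alpha-1}I^a_{2\alpha-1,t}f=f$ (valid here since $2\alpha-1\in(0,1)$) gives $T^a_{2\alpha-1}G(t)=\lambda^2 L_\sigma^2\,h(t)\ge\lambda^2 L_\sigma^2\,G(t)$. Comparing $G$ with $c_3\exp\!\big(\tfrac{\lambda^2 L_\sigma^2}{2\alpha-1}(t-a)^{2\alpha-1}\big)$, which solves the conformable equation $T^a_{2\alpha-1}K=\lambda^2 L_\sigma^2 K$ with $K(a)=c_3$ exactly in the manner recorded in the remark following the fractional Gronwall inequality — equivalently, dividing by $G>0$ to obtain $\tfrac{d}{dt}\log G(t)\ge\lambda^2 L_\sigma^2(t-a)^{2(\alpha-1)}$ and integrating from $a$ to $t$ (the singular weight being integrable) — yields $G(t)\ge c_3\exp\!\big(\tfrac{\lambda^2 L_\sigma^2}{2\alpha-1}(t-a)^{2\alpha-1}\big)$, and since $h\ge G$ the claim follows with $c_4=\tfrac{\lambda^2 L_\sigma^2}{2\alpha-1}$. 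I expect the only genuine subtlety to be this reverse Gronwall step: unlike the forward inequality quoted from \cite{Abdeljawad}, it requires $h$ to remain strictly positive and integrable against the singular weight, which is precisely where the hypothesis $u_0>c_3^{1/2}$, Condition~\ref{cond:E-U1}, and $\alpha>\tfrac12$ are all used; an alternative, should the comparison be awkward to state cleanly, is to iterate the integral inequality (Picard-type) and recognize the series $\sum_{n\ge0}\big(\tfrac{\lambda^2 L_\sigma^2}{2\alpha-1}\big)^n\tfrac{(t-a)^{n(2\alpha-1)}}{n!}$ term by term.
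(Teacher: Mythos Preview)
Your proposal is correct and follows essentially the same route as the paper: It\^o isometry on the mild solution \eqref{equation-mild}, the lower bound from Condition~\ref{cond:E-U2} and $u_0$, passage to $h(t)=\inf_x\mathbb{E}|u(x,t)|^2$, and then a comparison with the solution of the associated linear ODE. The paper is terser at the last step---it simply solves $\dot f=\lambda^2L_\sigma^2(t-a)^{2(\alpha-1)}f$ with $f(a)=c_3$ and asserts the inequality---whereas you make the comparison argument explicit via $G(t)$ and the conformable derivative; your version is the more careful of the two, but the substance is the same.
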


\begin{proof}[Proof of Theorem \ref{growth-bound2}]Assume that the initial condition $u_0(x)$ is bounded below, then by It\^{o} isometry, we have
\begin{eqnarray*}
\mathbb{E}|u(x,t)|^2&\geq& |u_0(x)|^2+\lambda^2\int_a^t (s-a)^{2(\alpha-1)}\mathbb{E}|\sigma(u(x,s))|^2d s\\
&\geq& c_3+\lambda^2L_\sigma^2\int_a^t (s-a)^{2(\alpha-1)}\mathbb{E}|u(x,s)|^2\,d s.
\end{eqnarray*}
Let $f(t):=\displaystyle\inf_{x\in\mathbb{R}}\mathbb{E}|u(x,t)|^2$. Then we have 
\begin{eqnarray}
f(t)\geq c_3+\lambda^2L_\sigma^2\int_a^t (s-a)^{2(\alpha-1)}f(s)d s.\label{renewal-ineq}
\end{eqnarray}
Thus solving the differential equation $\dot{f}(t)=\lambda^2L_\sigma^2 (t-a)^{2(\alpha-1)}f(t)$ with $f(a)=c_3$, we have  $f(t)= c_3\exp\bigg[\frac{\lambda^2L_\sigma^2}{2\alpha-1}(t-a)^{2\alpha-1}\bigg]$  and therefore $f(t)\geq c_3\exp\bigg[\frac{\lambda^2L_\sigma^2}{2\alpha-1}(t-a)^{2\alpha-1}\bigg]$ and the result follows.
\end{proof}

 We now give some  immediate consequences of the above Theorem \ref{growth-bound1} and Theorem \ref{growth-bound2}.
First is the long time behaviour of the energy solution, which says that the rate of growth depends on the operator and the noise term (the noise level) as time $t$ grows to infinity:
 \begin{corollary}\label{asymptotics-in-t}
  Suppose that $\alpha>\frac{ 1}{2}$ and  that  positive constants $L_\sigma,\,\lip_\sigma$  are as in Theorems \ref{growth-bound1} and \ref{growth-bound2}. Then for all $x\in \mathbb{R}$ and $\lambda>0$
  \begin{equation*}\label{eqn:t}
\frac{(\lambda L_\sigma)^2}{2\alpha-1}\leq \liminf _{t\rightarrow \infty}\frac{\log\mathbb{E}|u(x,t)|^2}{(t-a)^{2\alpha-1}}\leq\limsup_{t\rightarrow\infty}\frac{\log\mathbb{E}|u(x,t)|^2}{(t-a)^{2\alpha-1}}\leq \frac{(\lambda\lip_\sigma)^2}{2\alpha-1}.
 \end{equation*}
 When $\lip_\sigma=L_\sigma=L$ we have
  $$
  \lim_{t\rightarrow\infty}\frac{\log\mathbb{E}|u(x,t)|^2}{(t-a)^{2\alpha-1}}= \frac{(\lambda L)^2}{2\alpha-1}.
  $$
  \end{corollary}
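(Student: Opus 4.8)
The plan is to derive the corollary directly from the two-sided moment bounds already in hand, namely the upper bound of Theorem~\ref{growth-bound1} and the lower bound of Theorem~\ref{growth-bound2}, via a routine logarithm-and-squeeze argument; essentially everything reduces to observing that additive constants become negligible once divided by $(t-a)^{2\alpha-1}$.

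First I would fix $x\in\mathbb{R}$ and $\lambda>0$ and record that, for every $t\in[a,\infty)$, Theorem~\ref{growth-bound1} gives
$$\mathbb{E}|u(x,t)|^2\ \le\ \sup_{y\in\mathbb{R}}\mathbb{E}|u(y,t)|^2\ \le\ c_1\exp\big(c_2(t-a)^{2\alpha-1}\big),\qquad c_2=\frac{(\lambda\lip_\sigma)^2}{2\alpha-1},$$
while Theorem~\ref{growth-bound2} gives
$$\mathbb{E}|u(x,t)|^2\ \ge\ \inf_{y\in\mathbb{R}}\mathbb{E}|u(y,t)|^2\ \ge\ c_3\exp\big(c_4(t-a)^{2\alpha-1}\big),\qquad c_4=\frac{(\lambda L_\sigma)^2}{2\alpha-1},$$
with $c_1>0$ and, crucially, $c_3>0$ (this positivity is exactly the content of the hypothesis $u_0(x)>c_3$ in Theorem~\ref{growth-bound2}, and it is what makes the logarithm below legitimate).

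Next I would take logarithms across both chains and divide through by $(t-a)^{2\alpha-1}$; since $\alpha>\frac{1}{2}$ we have $(t-a)^{2\alpha-1}>0$ for $t>a$, so the division preserves the inequalities. This yields, for all $t>a$,
$$\frac{\log c_3}{(t-a)^{2\alpha-1}}+c_4\ \le\ \frac{\log\mathbb{E}|u(x,t)|^2}{(t-a)^{2\alpha-1}}\ \le\ \frac{\log c_1}{(t-a)^{2\alpha-1}}+c_2.$$
Because $2\alpha-1>0$, also $(t-a)^{2\alpha-1}\to\infty$ as $t\to\infty$, so the two constant terms vanish in the limit. Taking $\liminf_{t\to\infty}$ in the left-hand inequality and $\limsup_{t\to\infty}$ in the right-hand one, and using $\liminf\le\limsup$ in between, produces exactly
$$\frac{(\lambda L_\sigma)^2}{2\alpha-1}\ \le\ \liminf_{t\to\infty}\frac{\log\mathbb{E}|u(x,t)|^2}{(t-a)^{2\alpha-1}}\ \le\ \limsup_{t\to\infty}\frac{\log\mathbb{E}|u(x,t)|^2}{(t-a)^{2\alpha-1}}\ \le\ \frac{(\lambda\lip_\sigma)^2}{2\alpha-1}.$$
Specializing to $\lip_\sigma=L_\sigma=L$ forces the outer quantities to coincide, whence the $\liminf$ and $\limsup$ must agree and the limit equals $(\lambda L)^2/(2\alpha-1)$.

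Honestly there is no serious obstacle here: this is a textbook sandwich. The only two points that deserve a word of care are (i) that $c_3>0$, without which $\log\mathbb{E}|u(x,t)|^2$ could not be bounded below by $\log c_3$ in the manner used — but this is precisely what Theorem~\ref{growth-bound2} assumes; and (ii) that the exponent $2\alpha-1$ is strictly positive, which is what both drives $(t-a)^{2\alpha-1}\uparrow\infty$ (killing the additive constants) and justifies dividing the inequalities by $(t-a)^{2\alpha-1}$ without reversing them. Both are guaranteed by the standing hypothesis $\alpha>\frac{1}{2}$.
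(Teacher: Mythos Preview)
Your argument is correct and is exactly the routine logarithm-and-squeeze that the paper has in mind: the authors state Corollary~\ref{asymptotics-in-t} as an ``immediate consequence'' of Theorems~\ref{growth-bound1} and~\ref{growth-bound2} without writing out a proof, and what you wrote is precisely the natural fill-in. There is nothing to add.
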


The next result gives the rate of growth of the second moment with respect to the  noise parameter $\lambda$.

 \begin{corollary}
 Suppose that  $\alpha>\frac{ 1}{2}$ and that  positive constants $L_\sigma,\,\lip_\sigma$  are as in Theorems \ref{growth-bound1} and \ref{growth-bound2}. Then for all $x\in \mathbb{R}$ and $t>a$

 \begin{equation*}\label{eqn:lambda}
 \frac{L_\sigma^2 (t-a)^{2\alpha-1}}{2\alpha-1}\leq\liminf_{\lambda\rightarrow\infty}\frac{\log\mathbb{E}|u(x,t)|^2}{\lambda^2}\leq\limsup_{\lambda\rightarrow\infty}\frac{\log\mathbb{E}|u(x,t)|^2}{\lambda^2}\leq \frac{\lip_\sigma^2 (t-a)^{2\alpha-1}}{2\alpha-1}.
 \end{equation*}
 When  $\lip_\sigma=L_\sigma=L$ we get all $x\in \mathbb{R}$ and $t>a$
 \begin{equation*}\label{eqn:lambda}
\limsup_{\lambda\rightarrow\infty}\frac{\log\mathbb{E}|u(x,t)|^2}{\lambda^2}= \frac{L^2 (t-a)^{2\alpha-1}}{2\alpha-1}.
 \end{equation*}
 \end{corollary}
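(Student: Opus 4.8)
The plan is to read off both inequalities directly from the two-sided moment bounds already established in Theorem \ref{growth-bound1} and Theorem \ref{growth-bound2}, after taking logarithms and dividing by $\lambda^2$. The point to keep in mind throughout is that $x$ and $t$ are fixed, so $(t-a)^{2\alpha-1}$ is just a positive constant, and --- crucially --- the multiplicative constants $c_1$ and $c_3$ appearing in those theorems are the upper and lower bounds on the non-random initial datum $u_0$ and hence do not depend on $\lambda$; only $c_2$ and $c_4$ carry the $\lambda^2$ factor.

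For the upper estimate I would start from Theorem \ref{growth-bound1}, which gives $\mathbb{E}|u(x,t)|^2\leq \sup_{x\in\mathbb{R}}\mathbb{E}|u(x,t)|^2\leq c_1\exp\big(\frac{\lambda^2\lip_\sigma^2}{2\alpha-1}(t-a)^{2\alpha-1}\big)$. Taking logarithms yields $\log\mathbb{E}|u(x,t)|^2\leq \log c_1+\frac{\lambda^2\lip_\sigma^2}{2\alpha-1}(t-a)^{2\alpha-1}$, so dividing by $\lambda^2$ and letting $\lambda\rightarrow\infty$ kills the $\log c_1/\lambda^2$ term and gives
\[
\limsup_{\lambda\rightarrow\infty}\frac{\log\mathbb{E}|u(x,t)|^2}{\lambda^2}\leq \frac{\lip_\sigma^2(t-a)^{2\alpha-1}}{2\alpha-1}.
\]
Symmetrically, Theorem \ref{growth-bound2} gives $\mathbb{E}|u(x,t)|^2\geq \inf_{x\in\mathbb{R}}\mathbb{E}|u(x,t)|^2\geq c_3\exp\big(\frac{\lambda^2 L_\sigma^2}{2\alpha-1}(t-a)^{2\alpha-1}\big)$; taking logs, dividing by $\lambda^2$, and using $\log c_3/\lambda^2\rightarrow 0$ produces the matching lower bound on the $\liminf$. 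Chaining these two estimates together with the trivial inequality $\liminf\leq\limsup$ yields the displayed sandwich.

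The case $\lip_\sigma=L_\sigma=L$ is then immediate: the outer two terms of the chain collapse to the common value $\frac{L^2(t-a)^{2\alpha-1}}{2\alpha-1}$, which forces the $\liminf$ and $\limsup$ to coincide, so in particular the stated equality for the $\limsup$ holds (indeed the full limit exists).

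There is essentially no obstacle here; the only thing requiring a moment's care is confirming that the initial-condition constants $c_1$ and $c_3$ in Theorems \ref{growth-bound1}--\ref{growth-bound2} are genuinely independent of $\lambda$, which is clear from their proofs since they only bound $|u_0(x)|^2$. One may also note that the argument is entirely parallel to that of Corollary \ref{asymptotics-in-t}, with the roles of the fixed parameter and the varying parameter interchanged.
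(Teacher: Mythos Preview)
Your proposal is correct and matches the paper's approach exactly: the paper states this corollary as an immediate consequence of Theorems \ref{growth-bound1} and \ref{growth-bound2} without giving a separate proof, and your argument supplies precisely the expected details (take logs of the two moment bounds, divide by $\lambda^2$, use that $c_1,c_3$ are $\lambda$-independent so the $\log c_i/\lambda^2$ terms vanish). There is nothing to add.
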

 
\subsection{Global non-existence of solution}
We show that if the function $\sigma$ grows faster than linear, then  the second moment $\mathbb{E}|u(x,t)|^2$  of the solution to $\eqref{eqn1}$ ceases to exist for all time $t$.
Now suppose that instead of Condition \ref{cond:E-U2}, we have the following condition.
\begin{condition}\label{cond:E-U3}
There exist a finite positive constant, $L_\sigma$ such that for all $x\in\mathbb{R}$, we have
\begin{equation*}\label{cond:sigma}
 |\sigma(x)|\geq L_\sigma|x|^{\beta},\,\,\beta>1.
\end{equation*}
\end{condition}

\begin{theorem}\label{thm:blow}
Suppose that Conditions \ref{cond:E-U1} and \ref{cond:E-U3} are in force. Then there does not exist a solution to Equation \eqref{eqn1} for all $\alpha<\frac{1}{2}.$
\end{theorem}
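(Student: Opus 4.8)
The plan is to argue by contradiction in the spirit of the proof of Theorem~\ref{growth-bound2}, the decisive new feature being that for $\alpha<\tfrac12$ the kernel $(s-a)^{2(\alpha-1)}$ produced by It\^o's isometry is \emph{not} integrable near $s=a$. Suppose then that a predictable random field $u$ solves Equation~\eqref{eqn1} with $\mathbb{E}|u(x,t)|^2<\infty$ for all $x\in\mathbb{R}$ and all $t>a$, and carry over the standing assumption from the lower-moment subsection that the initial datum satisfies $u_0(x)>c_3>0$ (it suffices, in fact, that $u_0(x)>0$ at a single point $x$, since $u\equiv 0$ solves \eqref{eqn1} when $u_0\equiv0$). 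By the mild formulation \eqref{equation-mild} and the It\^o isometry,
\[
\mathbb{E}|u(x,t)|^2=|u_0(x)|^2+\lambda^2\int_a^t (s-a)^{2(\alpha-1)}\,\mathbb{E}|\sigma(u(x,s))|^2\,ds ,
\]
so in particular $\mathbb{E}|u(x,s)|^2\ge |u_0(x)|^2> c_3$ for every $x$ and every $s\ge a$.

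Next I would bound the nonlinear term from below and exhibit the divergence. Condition~\ref{cond:E-U3} gives $|\sigma(u(x,s))|^2\ge L_\sigma^2|u(x,s)|^{2\beta}$; since $z\mapsto z^{\beta}$ is convex on $[0,\infty)$ for $\beta>1$, Jensen's inequality yields
\[
\mathbb{E}|\sigma(u(x,s))|^2\ \ge\ L_\sigma^2\big(\mathbb{E}|u(x,s)|^2\big)^{\beta}\ \ge\ L_\sigma^2\, c_3^{\beta}\ >\ 0 .
\]
Substituting into the identity above,
\[
\mathbb{E}|u(x,t)|^2\ \ge\ c_3+\lambda^2 L_\sigma^2 c_3^{\beta}\int_a^t (s-a)^{2(\alpha-1)}\,ds .
\]
Because $\alpha<\tfrac12$ we have $2(\alpha-1)=2\alpha-2<-1$, so the last integral diverges (the singularity at $s=a$ is non-integrable), forcing $\mathbb{E}|u(x,t)|^2=+\infty$ for every $t>a$, which contradicts the assumed finiteness. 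Hence no such solution exists, which is the conclusion of the theorem.

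The one delicate point — and the main thing to get right — is the logical order of the contradiction: writing the It\^o isometry already presupposes that the stochastic integral in \eqref{equation-mild} is a genuine $L^2$-martingale, i.e. that $\int_a^t (s-a)^{2(\alpha-1)}\mathbb{E}|\sigma(u(x,s))|^2\,ds<\infty$, which is exactly what we go on to refute. A clean way to present it is to run the estimate directly on this admissibility condition: any solution of \eqref{eqn1} solves \eqref{equation-mild}, the Itô integral there is defined only if that integral is finite, but the lower bound $\mathbb{E}|\sigma(u(x,s))|^2\ge L_\sigma^2 c_3^{\beta}$ together with $\int_a^t(s-a)^{2\alpha-2}\,ds=+\infty$ shows it is infinite — so the mild solution is never even well defined for $\alpha<\tfrac12$. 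I would also note that $\beta>1$ is used only through Jensen's inequality, so the same argument in fact covers the borderline $\alpha=\tfrac12$ (logarithmic divergence) and any exponent $\beta\ge1$, and it makes transparent why the restriction $\alpha>\tfrac12$ was imposed throughout Theorems~\ref{growth-bound1}–\ref{growth-bound2}.
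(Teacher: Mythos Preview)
Your argument is correct and is genuinely different from (and simpler than) the paper's proof. The paper derives the same starting inequality
\[
f(t)\ \ge\ c_6+\lambda^2L_\sigma^2\int_a^t(s-a)^{2(\alpha-1)}f(s)^\beta\,ds,\qquad f(t)=\inf_{x}\mathbb{E}|u(x,t)|^2,
\]
but then introduces the auxiliary variable $y(t)=(t-a)^{2\alpha-1}f(t)$, compares with a Bernoulli-type ODE, and reads off blow-up from the explicit solution when $(2\alpha-1)(1-\beta)>0$. You bypass all of that by noting the decisive structural point: for $\alpha<\tfrac12$ the weight $(s-a)^{2(\alpha-1)}$ itself is non-integrable at $s=a$, so once $\mathbb{E}|\sigma(u(x,s))|^2$ is bounded below by a positive constant (which you get from $\mathbb{E}|u(x,s)|^2\ge|u_0(x)|^2$ and Jensen), the It\^o-isometry integral diverges outright. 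Your closing paragraph correctly identifies and resolves the only subtlety, namely that the divergence is really a statement that the admissibility condition for the It\^o integral in \eqref{equation-mild} fails.

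What each approach buys: your route isolates the mechanism cleanly, shows that the superlinearity $\beta>1$ is not essential for $\alpha\le\tfrac12$ (any $\beta\ge1$, or merely $\sigma(u_0(x))\neq0$ together with $L^2$-continuity, suffices), and explains transparently why the standing hypothesis $\alpha>\tfrac12$ appears elsewhere. The paper's ODE-comparison method is heavier here, but it is exactly the technique needed for the companion result covering $\alpha>\tfrac12$, where the kernel \emph{is} integrable and one must genuinely exploit $\beta>1$ to force finite-time blow-up; in that regime your non-integrability shortcut is unavailable. One cosmetic point: when you write $\mathbb{E}|u(x,s)|^2\ge|u_0(x)|^2>c_3$ you presumably mean $>c_3^2$ (or else take $c_3$ to be a lower bound on $|u_0|^2$ as the paper does); this does not affect the argument.
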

\begin{proof}
Assume that the initial condition $u_0(x)>c_5$, then by Condition \ref{cond:E-U3}, and applying Jensen's inequality we have
\begin{eqnarray*}
\mathbb{E}|u(x,t)|^2&\geq& |u_0(x)|^2+\lambda^2\int_a^t (s-a)^{2(\alpha-1)}\mathbb{E}|\sigma(u(x,s))|^2d s\\
&\geq& c_5^2+\lambda^2L_\sigma^2\int_a^t (s-a)^{2(\alpha-1)}\mathbb{E}|u(x,s)|^{2(\beta)}d s\\
&\geq& c_6+\lambda^2L_\sigma^2\int_a^t (s-a)^{2(\alpha-1)}\big(\inf_{x\in\mathbb{R}}\mathbb{E}|u(x,s)|^2\big)^{\beta}ds.
\end{eqnarray*}
Let $f(t):=\displaystyle\inf_{x\in\mathbb{R}}\mathbb{E}|u(x,t)|^2$. Then it follows that
\begin{eqnarray*}
f(t)&\geq& c_6+\lambda^2L_\sigma^2\int_a^t (s-a)^{2(\alpha-1)}f^{\beta}(s)d s\\
&=& c_6+\lambda^2L_\sigma^2\int_a^t (s-a)^{2(\alpha-1)-\beta(2\alpha-1)}\bigg((s-a)^{(2\alpha-1)}f(s)\bigg)^\beta d s.
\end{eqnarray*}
Assume that $T>a. $ Multiply through by $(t-a)^{(2\alpha-1)}$ and take $\alpha<\frac{1}{2}$ such that $2\alpha-1<0$, then
\begin{eqnarray*}
y(t)&\geq&  c_6(t-a)^{(2\alpha-1)}+\lambda^2L_\sigma^2(t-a)^{(2\alpha-1)}\int_a^t (s-a)^{2(\alpha-1)-\beta(2\alpha-1)}y^\beta(s) d s\\
&\geq& c_6(T-a)^{(2\alpha-1)}+\lambda^2L_\sigma^2(T-a)^{(2\alpha-1)}\int_a^t (s-a)^{2(\alpha-1)-\beta(2\alpha-1)}y^\beta(s) d s
\end{eqnarray*}
since $a\leq t\leq T\implies 0\leq t-a\leq T-a$  from  which it  follows that\\  $$(t-a)^{(2\alpha-1)}\geq (T-a)^{(2\alpha-1)} \quad\mbox{with}\quad y(t)=(t-a)^{(2\alpha-1)}f(t).$$ \\

Solving the differential equation $$\dot{y}(t)=\lambda^2L_\sigma^2(T-a)^{(2\alpha-1)} (t-a)^{2(\alpha-1)-\beta(2\alpha-1)}y^\beta(t)$$ with $y(a)= c_6(T-a)^{(2\alpha-1)}$ we have
\begin{eqnarray*}
y(t)&=&\bigg\{\frac{(1-\beta)\lambda^2L_\sigma^2(T-a)^{(2\alpha-1)}}{(2\alpha-1)(1-\beta)}(t-a)^{(2\alpha-1)(1-\beta)}+y(a)^{1-\beta}\bigg\}^{\frac{1}{1-\beta}}
\end{eqnarray*}

and the solution fails to exist for all $(2\alpha-1)(1-\beta)>0$, that is, for all $\beta>1$ and $\alpha<\frac{1}{2}$.
\end{proof}

\begin{theorem}
Suppose that Conditions \ref{cond:E-U1} and \ref{cond:E-U3} are in force. Then there does not exist a solution to Equation \eqref{eqn1} for all $\alpha\geq \frac{1}{2}.$
\end{theorem}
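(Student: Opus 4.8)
The plan is to mirror the proof of Theorem~\ref{thm:blow}: reduce the problem to a scalar nonlinear integral inequality for $f(t):=\inf_{x\in\mathbb{R}}\E|u(x,t)|^2$ and then extract a finite--time blow--up of the second moment. The new feature for $\alpha\ge\frac12$ is that the weight $(s-a)^{2(\alpha-1)}$ is no longer bounded; for $\alpha>\frac12$ it is still integrable on $[a,t]$, while for $\alpha=\frac12$ it is (barely) non--integrable at $s=a$. Because of this, the substitution $y(t)=(t-a)^{2\alpha-1}f(t)$ used in Theorem~\ref{thm:blow} now points the wrong way, so I would instead run a direct comparison argument on $f$ itself.

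First I would argue by contradiction. Suppose a solution $u$ of \eqref{eqn1} exists on all of $[a,\infty)$ with $\sup_{x\in\mathbb{R}}\E|u(x,t)|^2<\infty$ for each $t$, and take $u_0(x)>c_5>0$. Using the mild formulation \eqref{equation-mild}, It\^o's isometry, Condition~\ref{cond:E-U3} and Jensen's inequality exactly as in Theorem~\ref{thm:blow}, one gets, with $k:=\lambda^2L_\sigma^2$ and $\beta>1$,
\begin{equation*}
f(t)\ \ge\ c_6+k\int_a^t (s-a)^{2(\alpha-1)}f^{\beta}(s)\,\d s,\qquad t\ge a,
\end{equation*}
where $c_6>0$ and, by Condition~\ref{cond:E-U1}, $s\mapsto\E|u(x,s)|^2$ is continuous so that $f$ is locally bounded and the integral is well defined when $\alpha>\frac12$. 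If $\alpha=\frac12$ then $2(\alpha-1)=-1$ and, since $f(s)\ge c_6$, the integral $\int_a^t(s-a)^{-1}f^{\beta}(s)\,\d s=+\infty$ for every $t>a$, so the inequality forces $f(t)=+\infty$, a contradiction; hence no solution exists for $\alpha=\frac12$.

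For $\alpha\in(\frac12,1)$ I would set $F(t):=c_6+k\int_a^t (s-a)^{2(\alpha-1)}f^{\beta}(s)\,\d s$, so that $f\ge F\ge c_6>0$, $F$ is absolutely continuous, and $\dot F(t)=k(t-a)^{2(\alpha-1)}f^{\beta}(t)\ge k(t-a)^{2(\alpha-1)}F^{\beta}(t)$ since $x\mapsto x^{\beta}$ is increasing on $(0,\infty)$. Dividing by $F^{\beta}$ and multiplying by $1-\beta<0$ gives $\frac{\d}{\d t}F^{1-\beta}(t)\le (1-\beta)k(t-a)^{2(\alpha-1)}$; integrating from $a$ and using $\int_a^t(s-a)^{2(\alpha-1)}\,\d s=\frac{(t-a)^{2\alpha-1}}{2\alpha-1}$ yields
\begin{equation*}
F^{1-\beta}(t)\ \le\ c_6^{\,1-\beta}-\frac{(\beta-1)k}{2\alpha-1}\,(t-a)^{2\alpha-1}.
\end{equation*}
The right--hand side vanishes at the finite time $T^{\ast}$ with $(T^{\ast}-a)^{2\alpha-1}=\dfrac{(2\alpha-1)\,c_6^{\,1-\beta}}{(\beta-1)\lambda^2L_\sigma^2}$, so $F^{1-\beta}(t)\downarrow 0^{+}$, i.e.\ $F(t)\uparrow\infty$, and hence $\inf_{x\in\mathbb{R}}\E|u(x,t)|^2\uparrow\infty$, as $t\uparrow T^{\ast}$. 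This contradicts finiteness of the second moment on $[a,\infty)$, so no solution exists for $\alpha\in(\frac12,1)$ either. Combined with Theorem~\ref{thm:blow}, this gives non--existence of a global solution for every $\alpha\in(0,1)$.

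I expect the only real obstacle to be the rigorous justification of the differential manipulation of $F$ --- that is, confirming that this Bihari--type comparison is legitimate given only $L^2$--continuity of $u$ (absolute continuity and a.e.\ differentiability of $F$, positivity of $F$ bounded below by $c_6$, and the passage from the differential inequality back to $f$) --- together with the observation that the $\alpha<\frac12$ device of Theorem~\ref{thm:blow} must here be discarded in favour of the direct estimate above. Everything else is a routine repetition of calculations already carried out in the paper.
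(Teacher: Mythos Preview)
Your proof is correct and follows the same core strategy as the paper: derive the nonlinear integral inequality for $f(t)=\inf_x\mathbb{E}|u(x,t)|^2$ and extract finite--time blow--up via a Bihari--type comparison. For $\alpha>\tfrac12$ you spell out explicitly, through the auxiliary function $F$, exactly the comparison that the paper summarizes in one line as ``by comparison principle,'' arriving at the same formula $F^{1-\beta}(t)\le c_6^{1-\beta}-\frac{(\beta-1)\lambda^2L_\sigma^2}{2\alpha-1}(t-a)^{2\alpha-1}$. The one genuine difference is at $\alpha=\tfrac12$: you observe that the kernel $(s-a)^{-1}$ is non--integrable at $s=a$ while $f\ge c_6>0$, so the right--hand side is $+\infty$ for every $t>a$, giving an immediate contradiction; the paper instead restarts the integral at some $b>a$ to keep it finite, derives $f^{1-\beta}(t)\le c_6^{1-\beta}+(1-\beta)\lambda^2L_\sigma^2\ln\!\big[(t-a)/(b-a)\big]$ by the same ODE comparison, and reads off a finite blow--up time from the logarithm. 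Your route is shorter; the paper's is slightly more cautious in that it does not hinge on the well--posedness of the It\^o integral right at the singular endpoint $s=a$.
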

\begin{proof}Assume that the initial condition $u_0(x)>c_5$, then by Condition \ref{cond:E-U3}, and applying Jensen's inequality we have
\begin{eqnarray*}
\mathbb{E}|u(x,t)|^2&\geq& |u_0(x)|^2+\lambda^2\int_a^t (s-a)^{2(\alpha-1)}\mathbb{E}|\sigma(u(x,s))|^2d s\\
&\geq& c_5^2+\lambda^2L_\sigma^2\int_a^t (s-a)^{2(\alpha-1)}\mathbb{E}|u(x,s)|^{2(\beta)}d s\\
&\geq& c_6+\lambda^2L_\sigma^2\int_a^t (s-a)^{2(\alpha-1)}\big(\inf_{x\in\mathbb{R}}\mathbb{E}|u(x,s)|^2\big)^{\beta}ds.
\end{eqnarray*}
Let $f(t):=\displaystyle\inf_{x\in\mathbb{R}}\mathbb{E}|u(x,t)|^2$. Then it follows that
\begin{eqnarray}
f(t)&\geq& c_6+\lambda^2L_\sigma^2\int_a^t (s-a)^{2(\alpha-1)}f^{\beta}(s)d s.\label{non-linear-ineq}
\end{eqnarray}
Now consider the equation
$$ \frac{f'(t)}{f^\beta(t)}=\lambda^2 L_\sigma^2 (t-a)^{2(\alpha-1)}   $$
with $f(a)=c_6$. This equation has a solution
$$
f^{1-\beta}(t)=c_6^{1-\beta} +(1-\beta )(\lambda^2L_\sigma^2/(2\alpha-1))(t-a)^{2\alpha-1}.
$$

This solution  blows up in finite time. By comparison principle $f$ blows up in finite time too.

When $\alpha=1/2$ from \eqref{non-linear-ineq} we get  for  some $b>a$
\begin{eqnarray}
f(t)&\geq& c_6+\lambda^2L_\sigma^2\int_b^t (s-a)^{-1}f^{\beta}(s)d s.\label{non-linear-ineq-2}
\end{eqnarray}
Considering  the equation
$$ \frac{f'(t)}{f^\beta(t)}=\lambda^2 L_\sigma^2 (t-a)^{-1}   $$
with $f(b)=c_6$. This equation has a solution
$$
f^{1-\beta}(t)=c_6^{1-\beta} +(1-\beta )(\lambda^2L_\sigma^2)\ln \bigg[(t-a)/(b-a)\bigg].
$$

This solution  blows up in finite time. By comparison principle $f$ blows up in finite time too.
\end{proof}

\section{Conclusion}\label{400} We studied a class of conformable time-fractional stochastic equation, proved the existence and uniqueness result  under some suitable conditions  on the initial function and also studied the asymptotic behaviour (long-term properties) of the solution with respect to the time $t$ and noise parameter $\lambda$. The obtained result also shows that the global non-existence of the solution depends on the parameter $\alpha$, that is, the second moment of the solution fails to exist for all $t$ when the non-linearity term $\sigma$ grows faster than linear  for  $\alpha \in (0,1)$.
\begin{small}
\end{small}
 \end{document}